\def\numberlikeadb{\global\def\theequation{\thesection.\arabic{equation}}}
\newtheorem{theorem}{Theorem}[section]
\newtheorem{corollary}[theorem]{Corollary}
\newtheorem{proposition}[theorem]{Proposition}
\newtheorem{remark}[theorem]{Remark}
\newtheorem{con}[theorem]{Conjecture}
\numberwithin{equation}{section}
\newcommand{\eq}{\begin{equation}}
\newcommand{\qe}{\end{equation}}
\def\N{{\rm I\kern-0.16em N}}
\def\R{{\rm I\kern-0.16em R}}
\def\E{{\rm I\kern-0.16em E}}
\def\P{{\rm I\kern-0.16em P}}
\def\F{{\rm I\kern-0.16em F}}
\def\B{{\rm I\kern-0.16em B}}
\def\Z{{\rm I\kern-0.46em Z}}
\def\C{{\rm I\kern-0.46em C}}
\def\G{{\rm I\kern-0.50em G}}
\begin{document}

\begin{frontmatter}

\title{Some new Stein operators for product distributions}

\runtitle{Some new Stein operators for product distributions}

\begin{aug}

 \author{\fnms{Robert E.} \snm{Gaunt}\thanksref{a}\corref{Robert E. Gaunt}\ead[label=e1]{robert.gaunt@manchester.ac.uk}\ead[label=e2,url]{www.foo.com}}
 
 \,
 
\author{\fnms{Guillaume} \snm{Mijoule}\thanksref{b}\ead[label=e2]{A.Anastasiou@lse.ac.uk}}

\and

\author{\fnms{Yvik} \snm{Swan}\thanksref{c}\ead[label=e2]{yvswan@ulb.be}}

\affiliation[a]{The University of Manchester}
\affiliation[b]{INRIA Paris}
\affiliation[c]{Universit\'{e} libre de Bruxelles}


\runauthor{R. E. Gaunt, G. Mijoule and Y. Swan}

\end{aug}

\begin{abstract} We provide a general result for finding Stein
  operators for the product of two independent random variables whose
  Stein operators satisfy a certain assumption, extending a recent
  result of Gaunt, Mijoule and Swan \cite{gms18}.  This framework applies to non-centered
  normal and non-centered gamma random variables, as well as a general
  sub-family of the variance-gamma distributions.  Curiously, there is
  an increase in complexity in the Stein operators for products of
  independent normals as one moves, for example, from centered to
  non-centered normals.  As applications, we give a simple derivation
  of the characteristic function of the product of independent
  normals, and provide insight into why the probability density
  function of this distribution is much more complicated in the
  non-centered case than the centered case.
\end{abstract}

\begin{keyword}[class=MSC]
\kwd[Primary ]{60E15}
\kwd{62E15}
\end{keyword}

\begin{keyword}
\kwd{Stein's method}
\kwd{Stein operators}
\kwd{product distributions}
\kwd{product of independent normal random variables}
\end{keyword}

\end{frontmatter}

\section{Introduction}

In 1972, Charles Stein \cite{stein} introduced a powerful technique
for deriving explicit bounds in normal approximations.  Shortly after,
in 1975, Louis Chen \cite{chen 0} adapted the method to the Poisson
distribution, and since then Stein's method has been extended to a
wide variety of distributional approximations.  For a given target
distribution $p$, the first step in the general procedure is to find a
suitable operator $A$ acting on a class of functions $\mathcal{F}$
such that $\E [Af(X)] =0$ for all $f\in\mathcal{F}$, where the random
variable $X$ has distribution $p$.  The operator $A$ is called the
\emph{Stein operator}, and for continuous distributions is typically a
differential operator; for the $N(\mu,\sigma^2)$ distribution, the
classical operator is $Af(x)=\sigma^2f'(x)-(x-\mu)f(x)$.  This leads
to the \emph{Stein equation}
\begin{equation}\label{steineqn}Af_h(x)=h(x)-\E h(X),
\end{equation}
where $h$ is a real-valued function.  If $A$ is well chosen then, for
a given $h$, the Stein equation (\ref{steineqn}) can be solved for
$f_h$, and the problem of
estimating the proximity of the distribution of a random variable $W$
of interest to the distribution of the target random variable $X$, as
measured by $|\E h(W)-\E h(X)|$, reduces to one of
bounding $|\E [Af_h(W)]|$.  For a detailed account of
the method we refer the reader to the
monograph Stein \cite{stein2}.

In addition to the normal and Poisson distributions, Stein's method
has been adapted to many classical distributions, such as the
exponential (Chatterjee, Fulman and R\"ollin \cite{chatt}), gamma (Luk
\cite{luk}) and Laplace (Pike and Ren \cite{pike}), as well as quite
general families of distributions, such as the Pearson family
(Schoutens \cite{schoutens}), variance-gamma distributions (Gaunt
\cite{gaunt vg}) and a wide class of distributions satisfying a
certain diffusive assumption (D\"obler \cite{dobler beta}, Kusuoka and
Tudor \cite{kusuotud}); for an overview see Ley, Reinert and Swan
\cite{ley}. As such, over the years, a number of techniques have been
developed for finding Stein operators for a variety of distributions.
These include the density method (Stein \cite{stein2}, Ley, Reinert
and Swan \cite{ley}, Ley and Swan \cite{LS16}, Mijoule, Reinert and
Swan \cite{MRS18}), the generator method (Barbour \cite{barbour2},
G\"otze \cite{gotze}), the differential equation duality approach
(Gaunt \cite{gaunt gh}, Ley, Reinert and Swan \cite{ley}), and
probability generating function and characteristic function based
approaches of Upadhye, \v{C}ekanavi\v{c}ius and Vellaisamy
\cite{cvp17} and Arras et al$.$ \cite{aaps18}. The corpus of
literature concerning Stein operators and their applications is now
vast, and it continues growing at a steady pace. Stein operators
provide handles on target distributions which are in some sense just
as important and natural characteristics of a probability distribution
as its moments, its moment generating function, its p.d.f$.$, c.d.f$.$
or even its characteristic function. Finding tractable Stein operators
is thus, naturally, an important question.

In this paper we pursue the work begun in Gaunt \cite{gaunt pn} and Gaunt \cite{gaunt
    ngb} concerning the following question : ``given two independent
  random variables $X$ and $Y$ with Stein operators $A_X$ and $A_Y$,
  can one find a Stein operator for $Z = XY$?''  More specifically,
  the present paper is a complement (sequel) to our paper Gaunt, Mijoule and Swan \cite{gms18}
  where we developed an algebraic technique for finding Stein
  operators for products of independent random variables with
  \emph{polynomial Stein operators} satisfying a technical condition.
  Let $M(f)=(x\mapsto xf(x))$, $D(f)=(x\mapsto f'(x))$ and $I$ be the
  identity operator. We say that the absolutely continuous 
  variates $X$ and $Y$ have \emph{polynomial} Stein operators if they
  allow Stein operators of the form $A = \sum_{i,j}a_{ij}M^iD^j$ for
  $a_{ij}$ some real numbers. The highest value of $j$ such that
  $a_{ij}\neq 0$ is called the order of the operator. In Gaunt, Mijoule and Swan \cite{gms18}
  we provided a method for deriving operators under the technical
  assumption that $\# \left\{ j-i\, | \, a_{ij}\neq0 \right\}\le 2$
(see Assumption 3 and Lemma 2.6 of Gaunt, Mijoule and Swan \cite{gms18} for more details on
this condition).  For such random variables, Proposition 2.12 of Gaunt, Mijoule and Swan
\cite{gms18} gives a polynomial Stein operator for the product $XY$.
A number of classical random variables have Stein operators which
satisfy this assumption, such as the $N(0,\sigma^2)$ distribution with Stein operator $\sigma^2 D-M$, with others including the gamma,
beta, and even some more exotic distributions such
as the zero-mean symmetric variance-gamma distribution and PRR
distribution of P\"ekoz, R\"ollin and Ross \cite{pekoz}.  However, some very natural
  densities do \emph{not} satisfy the assumption. In fact, even the
non-centered normal distribution does not satisfy this assumption, as
its Stein operator $\sigma^2D+\mu I-M$ instead satisfies
$\# \left\{ j-i\, | \, a_{ij}\neq0 \right\}= 3$.  In Proposition
\ref{propsec2}, we shall address the natural problem of
extending the result of Gaunt, Mijoule and Swan \cite{gms18} to treat the product of two
independent random variables satisfying this new assumption. Here
we have only added one level of complexity in the operator;
nevertheless, as we will see later on, it is sufficient to include the
classical cases of non-centered normal and non-centered gamma, and a
more general sub-family of the variance-gamma distributions.  Also, as
noted in Remark \ref{remarksec2}, the proof technique is novel and
seems to be a useful addition to the toolkit for finding Stein
operators.

The Stein operators for the products of independent normal random
variables are particularly theoretically interesting, and we devote
Section \ref{sec3} to exploring some of their properties.   For the
case of two independent centered normals a second order Stein operator
was obtained by Gaunt \cite{gaunt pn}, whereas, rather curiously, we find a
third order operator for the product of two i.i.d$.$ normals, and a
fourth order operator for the product of two independent general
normals; see Table \ref{table:nonlin}. 
It is an important and natural question to ask whether our operators
have minimal order amongst all Stein operators with polynomial
coefficients.  We believe this is the case but are unable to prove it.
However, in Section \ref{sec3.1}, we are able to provide a brute force
approach for verifying this assertion for polynomial coefficients up
to a particular order.  This brute force approach is very general and in principle can be applied to any polynomial Stein operators.  In Section \ref{sec3.2}, we prove that our
Stein operators for products of independent normals characterise the
distribution.  We do this by appealing to a more general result, Proposition \ref{conmom}, which treats distributions that are determined by their moments.

\begin{table}[ht]
\caption{Stein operators for products of normal random variables. } 
\centering
\begin{tabular}{c l l}
\hline 
Product $P$ & Stein operator $A_Pf(x)$ (here we set $\sigma:=\sigma_X\sigma_Y$) \\ [0.5ex]
\hline
$N(0,\sigma_X^2)\times N(0,\sigma_Y^2)$ & $\sigma^2(xf''(x)+xf(x))-xf(x)$  \\ [1.5ex]
$N(\mu,\sigma_X^2)\times N(\mu,\sigma_Y^2)$ & $\sigma^3xf^{(3)}(x) + \sigma^2(\sigma-x) f''(x) - \sigma( x+(\sigma+\mu^2))f'(x)$\\
&$ + (x - \mu^2)f(x)$ \\ [1.5ex]
$N(\mu_X,\sigma_X^2)\times N(\mu_Y,\sigma_Y^2)$ & $\sigma_X^4\sigma_Y^4 xf^{(4)}(x)+\sigma_X^4\sigma_Y^4 f^{(3)}(x)-\sigma_X^2\sigma_Y^2(2x+\mu_X\mu_Y)f''(x)$ \\
& $-(\sigma_X^2\sigma_Y^2+\mu_X^2\sigma_Y^2+\mu_Y^2\sigma_X^2)f'(x)+(x-\mu_X\mu_Y)f(x)$  \\ [1ex]

\hline
\end{tabular}
\label{table:nonlin}
\end{table}

For the Stein operator of Gaunt \cite{gaunt pn} for the product of two
independent standard normal random variables, it was possible to solve
the corresponding Stein equation and bound the derivatives of the
  solution.  As a result, Gaunt \cite{gaunt pn} was able to derive explicit
bounds for product normal approximations.  However, it seems to be
beyond the scope of existing techniques in the Stein's method
literature to solve and then bound the derivatives of the solution to
our more complicated third and fourth order Stein equations for
products of non-centered normals.  It should be noted, though, that
there is still great utility to Stein equations even when it is not
possible to obtain bounds for the solution.  For example, as has been
demonstrated in several papers such as
Nourdin, Peccati and Swan \cite{nourdinpecswan2014}, Arras et al$.$ \cite{aaps16} and Arras et al$.$ \cite{aaps18},
Stein operators can be used for comparison of probability
distributions directly without solving Stein equations.  We also
stress that Stein operators are also of use in applications beyond
proving approximation theorems; for example, in obtaining
distributional properties (Gaunt \cite{gaunt pn}, Gaunt \cite{gaunt ngb}, Gaunt, Mijoule and Swan \cite{gms18}).  Indeed, in Section
\ref{sec3.3}, we use our Stein operators to obtain a simple derivation
of the characteristic function of two independent normals, and also
provide valuable insight into why there is a dramatic increase in
complexity in the probability density function from the centered to
non-centered case.

\section{New Stein operators for product distributions}
\label{sec2}

\subsection{A general result}
\label{sec:particular-case-two}

Throughout this paper, we shall make the following assumptions, which were also made in Gaunt, Mijoule and Swan \cite{gms18}; we refer the reader to that paper for some remarks on these assumptions.

\vspace{3mm}

\noindent \textbf{Assumption } 1). $X$ admits a smooth density $p$ with respect to the
Lebesgue measure on $\R$; this density is defined and non-vanishing on
some (possibly unbounded) interval $J \subseteq \R$. 2).  $X$   admits an
operator $A$ acting on $\mathcal F$ which contains the set of smooth
functions with compact support $\mathcal C_0^\infty(\R)$.  

\vspace{3mm}

Let $P$ be a real polynomial. Then it is easily proved (by checking it when $P$ is a monomial, then by linearity) that
\eq
\label{eq:pmd}
\begin{aligned}
P(MD) M = MP(MD+I),\quad \text{and} \quad
DP(MD) = P(MD+I) D
\end{aligned}
\qe (recall the notations $M(f)=(x\mapsto xf(x))$,
$D(f)=(x\mapsto f'(x))$ and $I$ the identity operator from the
introduction).  Now, for $a\in\R\setminus\{0\}$, let
$\tau_a (f) = \left( x \mapsto f(ax) \right)$.  Simple computations
show that (see Gaunt, Mijoule and Swan \cite[Lemma 2.5]{gms18}) $\tau_a M = a M \tau_a$ and $D\tau_a  = a \tau_a D.$
This implies that for any real polynomial $P$,
$$\tau_a P(MD) = P(MD) \tau_a.$$

\begin{proposition}\label{propsec2}
Let $X$ and $Y$ be i.i.d$.$ with common Stein operator of the form 
$$A = M - Q(MD) - P(MD) D$$
for $P, Q$ two real polynomials.
Then, a Stein operator for $Z = XY$ is
\eq
\label{eq:prop3140}
A_Z =  R_1(MD)D^2  + R_2(MD) D  + R_3(MD) + M R_4(MD) ,
\qe
where, for $U=MD$,
\begin{align*}
 R_1(U)&= (P(U))^2P(U+I)(U+I)Q(U+2I),\\
 R_2(U) &= -(P(U))^2Q(U)(U+I) -Q(U+I)P(U)Q^2(U),\\
 R_3(U)&= -UQ(U)P(U-I)-Q(U-I)(Q(U))^2,\\
 R_4(U) &=Q(U-I).
\end{align*} 
\end{proposition}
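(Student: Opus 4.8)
The plan is to verify the Stein identity $\E[(A_Zg)(XY)]=0$ directly, reducing it to the Stein identities for $X$ and $Y$ by means of a small family of auxiliary functionals. Two structural facts drive everything. First, the \emph{product property}: for any real polynomial $R$ the operator $R(MD)$ acts identically whether it differentiates in $x$, in $y$, or in $z=xy$ when applied to $g(xy)$, since $\tau_aP(MD)=P(MD)\tau_a$ and $x\partial_xg(xy)=y\partial_yg(xy)=zg'(z)$. Second, the commutation relations (\ref{eq:pmd}), which push a derivative across a polynomial in $MD$ at the cost of a unit shift, $DR(MD)=R(MD+I)D$ and $R(MD)M=MR(MD+I)$. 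These unit shifts are exactly what will manufacture the arguments $U-I,\,U,\,U+I,\,U+2I$ appearing in $R_4,R_3,R_2,R_1$.

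First I would introduce, for a one–variable test function $\nu$, the functionals $L[\nu]=\E[\nu(XY)]$, $T[\nu]=\E[X\nu(XY)]$ and $S[\nu]=\E[X^2\nu(XY)]$; since $X$ and $Y$ are i.i.d.\ one also has $T[\nu]=\E[Y\nu(XY)]$ and $S[\nu]=\E[Y^2\nu(XY)]$ by symmetry. Applying the Stein identity $\E[(Af)(X)]=0$, with $A=M-Q(MD)-P(MD)D$, to the partial test functions $x\mapsto\nu(xy)$, $x\mapsto x\nu(xy)$ and $x\mapsto y\nu(xy)$ in turn, and simplifying each right–hand side with the product property and (\ref{eq:pmd}), yields the three exact relations (with $U=MD$)
\[T[\nu]=L[Q(U)\nu]+T[P(U)D\nu],\]
\[S[\nu]=T[Q(U+I)\nu]+L[(U+I)P(U)\nu],\]
\[\E[XY\,\nu(XY)]=T[Q(U)\nu]+S[P(U)D\nu].\]
To these I would adjoin the two tautologies coming from $XY=Z$: namely $\E[XY\,\nu(XY)]=L[M\nu]$, and, via $U=MD$, $L[U\,h]=\E[XY\,h'(XY)]$. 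Together these let every term of $\E[(A_Zg)(XY)]$ that carries a factor $M$ or $U$ be re-expressed through the functionals above and then reduced by a single further use of the $X$–Stein identity.

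The core is a finite reduction. Writing $A_Zg=R_1(U)g''+R_2(U)g'+R_3(U)g+MR_4(U)g$, I would treat $L[MR_4(U)g]=\E[XY(R_4(U)g)(XY)]$ with the third relation followed by the second, and convert the $U$–containing parts of the $R_i$–terms into $\E[XY(\cdots)]$ functionals via the tautologies, reducing those in the same manner. Each step is a legitimate (hence vanishing) Stein identity, so the whole computation is bookkeeping of how the pieces recombine; equivalently, one may phrase the outcome as a pointwise identity exhibiting $A_Zg(xy)$ as $A$ applied in the $x$–variable plus $A$ applied in the $y$–variable to explicit functions, whose expectation is then zero. The assertion to be checked is that, with the stated $R_i$, every occurrence of $T$ and of $S$ cancels and the surviving $L$–terms sum to $L[0]=0$, the shifts in the arguments of $P$ and $Q$ lining up precisely because of (\ref{eq:pmd}).

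The main obstacle is exactly this final cancellation. Because $T$ and $S$ are \emph{not} determined by the law of $Z=XY$ alone (for instance $T[1]=\E[X]$ is invisible to the distribution of $Z$), they cannot be discarded term by term: the vanishing of all $T$– and $S$–contributions is a genuine algebraic identity among the shifted polynomials $P(U),P(U\pm I),Q(U),Q(U\pm I),Q(U+2I)$, and it is this identity that forces the particular form of $R_1,\dots,R_4$. I would organise the verification by order of $g$–derivative, matching the top–order terms first (this pins down $R_1$) and descending to the zeroth order; by the iid symmetry it is enough to carry out the reduction using the $X$–Stein identity throughout, since the $Y$–identity yields the same relations.
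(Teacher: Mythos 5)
Your setup is exactly the paper's: the three relations you derive for $T$, $S$ and $\E[XY\nu(XY)]$ are precisely the paper's \eqref{eq:xfofz}, \eqref{eq:x2f} and \eqref{eq:zfz} (using the i.i.d$.$ symmetry to identify $\E[Y\,\cdot\,]$ with $T[\cdot]$), and your observation that the $T$- and $S$-functionals are not determined by the law of $Z$ and therefore must cancel algebraically is the right diagnosis of where the difficulty lies. The gap is that you stop exactly where the proof acquires its content: you declare the rest to be ``bookkeeping'' while simultaneously (and correctly) calling the cancellation ``the main obstacle'', and you never exhibit the combination that achieves it. The relation $T[\nu]=L[Q(U)\nu]+T[P(U)D\nu]$ cannot be iterated away --- each application reintroduces $T$ with a higher-order operator inside --- so the elimination must come from applying your identities to specific operator-transformed test functions and adding or subtracting the results. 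The paper's choices are: substitute $P(U)Df$ into \eqref{eq:yfofz} and add to \eqref{eq:xfofz}, giving \eqref{eq:fz1} in which the $X$-weight is $I-P(U+I)P(U)D^2$; combine \eqref{eq:zfz} with \eqref{eq:x2f} to get \eqref{eq:fz2}, in which the $X$-weight is $Q(U)+Q(U+I)P(U)D$; then apply \eqref{eq:fz2} to $P(U)Df$ and add to \eqref{eq:fz1} applied to $Q(U-I)f$, and finally apply the resulting identity to $Q(U)f$ and subtract \eqref{eq:fz2} applied to $Q(U-I)f$. These substitutions are what manufacture the shifted arguments $Q(U+2I)$, $Q(U-I)$, etc$.$ appearing in $R_1,\dots,R_4$; they are the proof, not an afterthought to it.

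Your fallback of ``matching by order of $g$-derivative'' does not obviously close this gap either: the unknowns are operator-valued test-function substitutions rather than scalar coefficients, and at a fixed derivative order the surviving $T$- and $S$-terms carry different shifted polynomials in $U$ that must be made to coincide identically in $U$ --- which is the same cancellation problem restated, not a method for solving it. With the explicit substitution sequence above inserted, your outline becomes the paper's proof; without it, the proposal verifies the correct preliminary identities but does not establish the proposition.
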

\begin{proof}
Let $Z = XY$ and $f \in \mathcal F$. Denote $U = MD$. We have
\begin{align}
\E[X f(Z)] &= \E[M \tau_Yf(X)]\nonumber\\
&= \E [ Q(U) \tau_Y f(X) + P(U)D \tau_Yf(X)]\nonumber\\
&= \E [\tau_Y Q(U) f(X) + Y \tau_Y P(U) D f(X)]\nonumber\\
\E[X f(Z)]& = \E [Q(U) f(Z) + Y P(U) D f(Z)]. \label{eq:xfofz}
\end{align}
Similarly,
\eq
\E[Yf(Z)] = \E [Q(U) f(Z) + X P(U) D f(Z)]. \label{eq:yfofz}
\qe
Replace $f$ with $P(U)D f$ in \eqref{eq:yfofz} and add up to \eqref{eq:xfofz} to get
$$
\E[X(I- P(U)DP(U)D)f(Z)] = \E[(Q(U) + Q(U)P(U)D ) f (Z)],
$$
which is also, using \eqref{eq:pmd},
\eq
\label{eq:fz1}
\E[X(I- P(U+I)P(U)D^2)f(Z)] = \E[(Q(U) + Q(U)P(U)D) f (Z)].
\qe
Now using \eqref{eq:yfofz} and conditioning, we can compute
\begin{align}
\E[Z f(Z)] &= \E[ X \E[Y f(Z) \, | \, X]]= \E\left[ X Q(U) f (Z) +  X^2P(U) D f(Z) \right].\label{eq:zfz}
\end{align}
We also have
\eq
\begin{aligned}
\E[X^2 f(Z)] &= \E [ M^2 \tau_Yf(X)] \nonumber\\
&= \E[ Q(U) M \tau_Y f(X) + P(U) D M\tau_Yf(X)]\nonumber\\
&= \E [M \tau_Y Q(U+I) f(X) + \tau_YP(U)(U+I)f(X)]\nonumber\\
\E[X^2f(Z)]&= \E[X Q(U+I)f(Z) + P(U)(U+I)f(Z)]. \label{eq:x2f}
\end{aligned}
\qe
Thus we obtain by \eqref{eq:zfz}
\eq
\label{eq:fz2}
\E[(M-  P(U)P(U)(U+I)D)f(Z)] = \E[X\left( Q(U)+ Q(U+I)P(U)D \right)f(Z)].
\qe
Apply \eqref{eq:fz2} to $P(U)D f$ and add up to \eqref{eq:fz1} applied to $Q(U-I) f$ to obtain
\begin{align*}
&\E[X(Q(U)P(U)D+Q(U-I))f(Z)] \\
 &\quad=\E[(UP(U-I)- (P(U))^2P(U+I)(U+I) D^2\\
 &\quad\quad+(Q(U)+Q(U)P(U)D)Q(U-I) )f(Z)].
\end{align*}
Apply the preceding equation to $Q(U)f$ and subtract to \eqref{eq:fz2} applied to $Q(U-I)f$ to get the result.
\end{proof}

The case that $P$ and $Q$ are polynomials of degree one is important, as it is applicable to non-centered normal and non-centered gamma random variables, as well as a general sub-family of the variance-gamma distributions.  To this end, let us define the operator $T_r:=MD+rI$.  We note that the limit of $T_r$ as $r\rightarrow\infty$ is ill-defined, but we do have $\lim_{r\rightarrow\infty}r^{-1}T_r=I$ (see Gaunt, Mijoule and Swan \cite[Remark 2.3]{gms18}).

\begin{corollary}\label{prop:314}Let $\alpha,\beta \in \R$  and $a,b \in \R \cup \{\infty\}$ (if either $a$ or $b$ are set to $+\infty$, then we proceed as described above). Let $X,Y$ be i.i.d$.$  with common Stein operator
$$A = M - \alpha T_a - \beta T_b D.$$
Then, a Stein operator for $Z = XY$ is
\eq
\label{eq:prop314}
A_Z = (M - \alpha^2T_a^2 - \beta^2 T_b^2 T_1 D)(T_{a-1}-\beta T_bT_{a+1}D)  - 2\alpha^2 \beta T_a^2 T_b T_{a+1} D.
\qe
\end{corollary}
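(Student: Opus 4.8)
The plan is to specialise the proof of Proposition \ref{propsec2} to the degree-one case $Q(U)=\alpha T_a$ and $P(U)=\beta T_b$, so that $Q(MD)=\alpha T_a$ and $P(MD)D=\beta T_b D$ recover the stated operator $A=M-\alpha T_a-\beta T_b D$, and then to re-run the elimination of the factor $X$ with a choice of test function tailored to land directly on the factored form \eqref{eq:prop314}, rather than going through the general formulas for $R_1,\ldots,R_4$. Under this specialisation, using $P(U)^2(U+I)=\beta^2 T_b^2 T_1$, $P(U+I)P(U)=\beta^2 T_b T_{b+1}$ and $Q(U+I)=\alpha T_{a+1}$, the two intermediate identities of that proof read
\[
\E[X(I-\beta^2 T_b T_{b+1}D^2)f(Z)]=\E[\alpha(T_a+\beta T_a T_b D)f(Z)]
\]
(this is \eqref{eq:fz1}) and
\[
\E[(M-\beta^2 T_b^2 T_1 D)f(Z)]=\E[X\alpha(T_a+\beta T_b T_{a+1}D)f(Z)]
\]
(this is \eqref{eq:fz2}), each valid for every test function $f$.

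First I would apply \eqref{eq:fz2} with $f$ replaced by $(T_{a-1}-\beta T_b T_{a+1}D)f$ and \eqref{eq:fz1} with $f$ replaced by $\alpha T_a T_{a-1}f$. The reason for these choices is that the two $X$-weighted expressions then coincide, because of the operator identity
\[
(T_a+\beta T_b T_{a+1}D)(T_{a-1}-\beta T_b T_{a+1}D)=(I-\beta^2 T_b T_{b+1}D^2)\,T_a T_{a-1}.
\]
Thus the right-hand side of \eqref{eq:fz2} equals the left-hand side of \eqref{eq:fz1} (both carrying the factor $\alpha$), and substituting the latter's right-hand side eliminates $X$, giving $\E[(\mathcal L-\mathcal R)f(Z)]=0$ with $\mathcal L=(M-\beta^2 T_b^2 T_1 D)(T_{a-1}-\beta T_b T_{a+1}D)$ and $\mathcal R=\alpha^2 T_a^2(T_{a-1}+\beta T_b T_{a+1}D)$. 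A one-line rearrangement (noting that $-\alpha^2 T_a^2(T_{a-1}-\beta T_b T_{a+1}D)-2\alpha^2\beta T_a^2 T_b T_{a+1}D=-\mathcal R$) then identifies $\mathcal L-\mathcal R$ with the stated $A_Z$.

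The verification of the displayed operator identity is where the real work lies, and I expect it to be the main obstacle. It should be carried out purely from the commutation rules $DT_r=T_{r+1}D$ and $T_r M=MT_{r+1}$ (both immediate from \eqref{eq:pmd} applied to $x\mapsto x+r$), together with the fact that all the operators $T_r=MD+rI$ commute among themselves. Pushing every $D$ to the right and grading by powers of $D$, the crucial point is that the two first-order ($D^1$) contributions cancel exactly; this cancellation is the special feature of the degree-one case and is precisely what makes the clean factored form available, while the $D^0$ and $D^2$ parts match after reindexing the shifts. I would check the three graded identities separately.

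Finally I would dispose of the boundary cases: when $a$ or $b$ equals $+\infty$ one replaces the relevant $T_r$ by its normalisation $r^{-1}T_r$ and passes to the limit $r^{-1}T_r\to I$ as in Gaunt, Mijoule and Swan \cite{gms18}; since every identity above is homogeneous in the relevant shift family, the limit causes no difficulty. An alternative, purely computational route would be to substitute $P(U)=\beta T_b$, $Q(U)=\alpha T_a$ directly into $R_1,\ldots,R_4$, expand \eqref{eq:prop314} into the standard form $R_1(MD)D^2+R_2(MD)D+R_3(MD)+MR_4(MD)$, and compare term by term (the leading term $MT_b T_{a+1}D$ reduces to $T_{b-1}T_a\,MD$ via $T_r M=MT_{r+1}$); this also works but obscures the structural reason for the factorisation while requiring the same commutation bookkeeping.
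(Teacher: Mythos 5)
Your argument is correct, but it takes a genuinely different route from the paper's. The paper's proof of Corollary \ref{prop:314} is a pure substitute-and-verify: set $Q(U)=\alpha(U+a)$ and $P(U)=\beta(U+b)$ in the general formula \eqref{eq:prop3140} of Proposition \ref{propsec2} and check by direct computation that the resulting operator agrees with \eqref{eq:prop314} up to an overall factor of $\alpha$ (this factor is visible in $R_4(U)=Q(U-I)=\alpha T_{a-1}$ versus the unnormalised $MT_{a-1}$ term in \eqref{eq:prop314}); that computational route is exactly the ``alternative'' you mention in your last paragraph. What you do instead is re-run the elimination of $X$ from the specialised identities \eqref{eq:fz1} and \eqref{eq:fz2} with a single tailored test function $(T_{a-1}-\beta T_bT_{a+1}D)f$, using the factorisation
$(T_a+\beta T_bT_{a+1}D)(T_{a-1}-\beta T_bT_{a+1}D)=(I-\beta^2T_bT_{b+1}D^2)T_aT_{a-1}$
to match the two $X$-weighted sides. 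I checked this identity (the $D^1$ terms $-\beta T_aT_bT_{a+1}D+\beta T_bT_{a+1}T_aD$ do cancel, and $D^2T_aT_{a-1}=T_{a+2}T_{a+1}D^2$ reconciles the $D^2$ parts), as well as the identification $\mathcal L-\mathcal R=A_Z$, and everything is in order; note that your derivation lands directly on the normalised form of \eqref{eq:prop314} with no stray factor of $\alpha$. The trade-off: the paper's proof is shorter given that Proposition \ref{propsec2} has already been established, while yours explains structurally why the clean factored form exists in the degree-one case (the exact cancellation at order $D^1$) and replaces the general two-stage elimination of the Proposition's proof by a one-step elimination. The handling of $a,b=\infty$ by normalising $r^{-1}T_r\to I$ matches the paper's convention.
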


\begin{proof}Set $Q(U)=\alpha(U+a)$ and $P(U)=\beta(U+b)$ in (\ref{eq:prop3140}).  A calculation then verifies that (\ref{eq:prop314}) and (\ref{eq:prop3140}) are equivalent operators in this case (up to a factor $\alpha$). 
\end{proof}

\begin{remark}\label{remarksec2}The proof of Proposition \ref{propsec2} involves applying certain equations to test functions of the form $Lf$, where $L$ is a linear differential operator.  This allowed us to cancel  terms to obtain  (\ref{eq:prop3140}).  We consider this technique to be a useful addition to the toolkit for finding Stein operators.  Indeed, this approach was recently used by Gaunt \cite{gaunt spl} to find Stein operators for the $H_3(Z)$ and $H_4(Z)$, where $H_n$ is the $n$-th Hermite polynomial and $Z\sim N(0,1)$.  In Section \ref{appa}, we also use the technique to derive a Stein operator for the product of independent non-centered normals with different means.  
\end{remark}

\begin{remark}We attempted to generalise Proposition \ref{propsec2} so that $X$ and $Y$ are no longer identically distributed, for which $X$ and $Y$ have Stein operators of the form $A_X=M-Q_X(MD)-P_X(MD)D$ and $A_Y=M-Q_Y(MD)-P_Y(MD)D$.  We were only able to find a Stein operator for the product $XY$ under the very restrictive condition that $P_Y(U)Q_X(U)Q_X(U+I)=P_X(U)Q_Y(U)Q_Y(U+I)$. This Stein operator  had the unusual feature of not being symmetric in $X$ and $Y$. In certain simple cases, we
  can, however, apply the proof technique of Proposition
  \ref{propsec2} to derive a Stein operator for the product of two
  non-identically distributed random variables; see Section
  \ref{appa}.  
\end{remark}


\begin{remark}Note that, whilst the Stein operator for $X$ and $Y$ in Proposition \ref{propsec2} satisfies the condition $\# \left\{ j-i\, | \, a_{ij}\neq0 \right\}= 3$, the Stein operator (\ref{eq:prop3140}) for their product satisfies $\# \left\{ j-i\, | \, a_{ij}\neq0 \right\}= 4$.  Thus, it is not possible to iterate Proposition \ref{propsec2} to find a Stein operator for product of three i.i.d$.$ random variables.  This is in contrast to the work of Gaunt, Mijoule and Swan \cite{gms18} which was carried out under the assumption $\# \left\{ j-i\, | \, a_{ij}\neq0 \right\}= 2$.
\end{remark}

\subsection{Examples}\label{sec:examples}

\subsubsection{Product of non-centered
  normals}\label{sec:product-non-centered}
  
  Assume $X$ and $Y$ are independent standard normal random variables.  A Stein operator for $X+\mu$ (or $Y+\mu$) is $A=D-M+\mu I$.  Applying Corollary \ref{prop:314}  with $\alpha=\mu$, $\beta=1$ and $a=b=\infty$ gives the following Stein operator for $Z=(X+\mu)(Y+\mu)$:
\begin{align}A_{Z}&=(M - \mu^2 I -  T_1 D)(I-D)  - 2\mu^2 D \nonumber \\
\label{equal1}&=MD^3 + (I-M) D^2 - (M+(1+\mu^2)I) D + M - \mu^2I. 
\end{align}  
(Here, and for the rest of this paper, we consider the unit variance case; the extension to general case follows from a straightforward rescaling and the resulting Stein operator for the product is given in Table \ref{table:nonlin}.) Note that when $\mu=0$ the operator becomes
\begin{align*}A_{Z}f(x)&=M(D^3-D^2-D+I)f(x)+(D^2-D)f(x) \\
&=x(f^{(3)}(x)-f''(x))+(f''(x)-f'(x))+x(f'(x)-f(x)).
\end{align*}
Taking $g(x)=f'(x)-f(x)$ then yields $A_{Z}f(x)= \tilde {A} _{Z }g (x) = xg''(x)+g'(x)-xg(x)$,
which we recognise as the product normal Stein operator that was obtained by Gaunt \cite{gaunt pn}.

\subsubsection{Product of non-centered gammas}

Assume $X$ and $Y$ are distributed as a $\Gamma(r,1)$, with p.d.f$.$ $p(x)=\frac{1}{\Gamma(r)}x^{r-1}\mathrm{e}^{-x}$, $x>0$, and let $\mu \in \R$. A Stein operator for $X+\mu$ (or $Y+\mu$) is $A = T_{r+\mu} - \mu D - M.$ Corollary \ref{prop:314} applied with $\alpha = 1$, $\beta = -\mu$, $a = r+\mu$, $b =  \infty$ yields the following fourth-order Stein operator for $Z=(X+\mu)(Y+\mu)$:
$$A_Z =  (M - T_{r+\mu}^2 - \mu^2 T_1 D)(T_{r+\mu-1}+\mu T_{r+\mu+1}D)  +2 \mu T_{r+\mu}^2  T_{r+\mu+1} D.$$
Note also that when $\mu=0$, this operator reduces to $(M-T_r^2)T_{r-1}$, which is the product gamma Stein operator of Gaunt \cite{gaunt ngb} applied to $T_{r-1}f$ instead of $f$.

\subsubsection{Product of variance-gamma random variables}\label{sec:vg}

The variance-gamma distribution with parameters $r > 0$, $\theta \in \R$, $\sigma >0$, $\mu \in \R$ has p.d.f$.$
\begin{equation}\label{vgdef}f(x) = \frac{1}{\sigma\sqrt{\pi} \Gamma(\frac{r}{2})} \mathrm{e}^{\frac{\theta}{\sigma^2} (x-\mu)} \bigg(\frac{|x-\mu|}{2\sqrt{\theta^2 +  \sigma^2}}\bigg)^{\frac{r-1}{2}} K_{\frac{r-1}{2}}\bigg(\frac{\sqrt{\theta^2 + \sigma^2}}{\sigma^2} |x-\mu| \bigg),   
\end{equation}
$x\in\R$, where $K_\nu(x)=\int_0^\infty \mathrm{e}^{-x\cosh(t)}\cosh(\nu t)\,\mathrm{d}t$, $x>0$, is the modified Bessel function of the second kind.  If a random variable $W$ has density (\ref{vgdef}) then we write $W\sim \mathrm{VG}(r,\theta,\sigma,\mu)$. A $\mathrm{VG}(r,\theta,\sigma,0)$ Stein operator is given by $\sigma^2 T_r D + 2 \theta T_{r/2} - M$ (see Gaunt \cite{gaunt vg}). Applying Corollary \ref{prop:314} with $\alpha = 2\theta, \beta = \sigma^2, a = r/2, b =  r$, we get the following Stein operator for the product of two independent $\mathrm{VG}(r,\theta,\sigma,0)$ random variables:

$$A= (M - 4\theta^2T_{r/2}^2 - \sigma^4 T_r^2 T_1 D)(T_{r/2-1}-\sigma^4 T_rT_{r/2+1}D)  - 8\theta^2 \sigma^2 T_{r/2}^2 T_r T_{r/2+1} D.$$

Note that when $\theta=0$ we have
$$Af(x)= (M  - \sigma^4 T_r^2 T_1 D)(T_{r/2-1}-\sigma^4 T_rT_{r/2+1}D)f(x).$$
Defining $g:\R\rightarrow\R$ by $xg(x)=-(T_{r/2-1}-\sigma^4 T_rT_{r/2+1}D)f(x)$ gives
\begin{align*}Ag(x)&= (\sigma^4 T_r^2 T_1 D-M)Mg(x) =\sigma^4 T_r^2T_1^2g(x)-M^2g(x),
\end{align*}
which is in agreement with the product variance-gamma Stein operator given in Section 3.2 of Gaunt, Mijoule and Swan \cite{gms18}.  Lastly, we note that the $\mathrm{VG}(r,\theta,\sigma,\mu)$ Stein operator of Gaunt \cite{gaunt vg}, as given by 
\[\sigma^2(M-\mu)D^2+(r\sigma^2+2\theta(M-\mu))D+(r\theta-(M-\mu))I,\]
 satisfies $\# \left\{ j-i\, | \, a_{ij}\neq0 \right\}= 4$ when $\mu\not=0$, and therefore one cannot apply Proposition \ref{propsec2} or Corollary \ref{prop:314} to find a Stein operator for the product of two such  variates.

\subsubsection{Product of non-identically distributed non-central normals}\label{appa}

By working on a case-by-case basis it is possible to use the proof technique of Proposition \ref{propsec2} to find Stein operators for the product of two non-identically distributed random variables, whose Stein operators satisfy the assumptions of the proposition. We find that a Stein operator for the product of independent normals $N(\mu_X,1)$ and $N(\mu_Y,1)$ is
\begin{equation}\label{unequal1}MD^4+D^3-(2M+\mu_X\mu_YI)D^2-(1+\mu_X^2+\mu_Y^2)D+M-\mu_X\mu_YI.
\end{equation}

Let us now provide a derivation of this Stein operator. Let $X$ and $Y$ be independent standard normal random variables and define $Z=(X+\mu_X)(Y+\mu_Y)$.  We will use repeatedly the fact that $\E[Wg(W)] = \E[g'(W)]$ for $W\sim N(0,1)$, as well as conditioning arguments, and we let $\E_W[]$ stand for the expectation conditioned on $W$.  Let $f:\R\rightarrow\R$ be four times differentiable and such that $\E|Zf^{(i)}(Z)|<\infty$ for $i=0,1,\ldots4$ and $\E|f^{(i)}(Z)|<\infty$ for $i=0,1,2,3$, where $f^{(0)}\equiv f$.  Then 
\begin{align}
&\E[Zf(Z)]= \E[(X+\mu_X)(Y+\mu_Y)f((X+\mu_X)(Y+\mu_Y))]\nonumber\\
&= \E[(Y+\mu_Y)\E_Y[Xf((X+\mu_X)(Y+\mu_Y)]] + \mu_X\E[(Y+\mu_Y)f(Z)]\nonumber\\
&= \E[(Y+\mu_Y)^2 f'(Z)] + \mu_X\E[(Y+\mu_Y)f(Z)]\nonumber\\
&=\E[Y(Y+\mu_Y) f'(Z)] + \mu_X\E[(Y+\mu_Y)f(Z)]+\mu_Y\E(Y+\mu_Y)f'(Z)]\nonumber\\
&=\E[f'(Z)] + \E[(X+\mu_X)(Y+\mu_Y)f''(Z)]+ \mu_X\E[(Y+\mu_Y)f(Z)]\nonumber\\
&\quad+\mu_Y\E[(Y+\mu_Y)f'(Z)]\nonumber\\
&=(1+\mu_Y^2)\E [f'(Z)]+\E[Zf''(Z)]+\mu_X\mu_Y\E[f(Z)]+\mu_X \E[Yf(Z)]\nonumber\\
\label{eq:5}&\quad+\mu_Y\E[Yf'(Z)] .
\end{align}
By again applying a conditioning argument we obtain
\begin{align*}\E[Yf(Z)]&=\E[(X+\mu_X)f'(Z)]=\mu_X\E[f'(Z)]+\E[Xf'(Z)]\\
&=\mu_X\E[f'(Z)]+\E[(Y+\mu_Y)f''(Z)]
\end{align*}
(and the same applies to $\E[Yf'(Z)]$).
Hence
\begin{align}&\E[Zf(Z)]\nonumber \\
 &=(1+\mu_Y^2)\E [f'(Z)]+\E[Zf''(Z)]+\mu_X\mu_Y\E[f(Z)]+\mu_X^2\E[f'(Z)]\nonumber \\
&\quad+\mu_X\mu_Y\E[f''(Z)]+\mu_X\E[Yf''(Z)]+\mu_Y\mu_X\E[f''(Z)]+\mu_Y^2\E[f^{(3)}(Z)]\nonumber \\
&\quad+\mu_Y\E[Yf^{(3)}(Z)]+\mu_Y\E[Yf^{(3)}(Z)]\nonumber\\
&=(1+\mu_X^2+\mu_Y^2)\E [f'(Z)]+\E[Zf''(Z)]+\mu_X\mu_Y\E[f(Z)]\nonumber\\
\label{eq:6}&\quad+2\mu_X\mu_Y\E[f''(Z)]+\mu_Y^2\E[f^{(3)}(Z)]+\mu_X\E[Yf''(Z)]+\mu_Y\E[Yf^{(3)}(Z)].
\end{align}
Isolating the expressions depending on $Y$ from   
\eqref{eq:5} and (\ref{eq:6}), we obtain two different equations:
\begin{align}\label{eq:333}\mu_X \E[Yf''(Z)]+\mu_Y\E[Yf^{(3)}(Z)] &=\E[(Z-\mu_X\mu_Y)f(Z)\nonumber\\
&\quad-(1+\mu_X^2+\mu_Y^2)f'(Z) \nonumber\\
&\quad-(Z+2\mu_X\mu_Y)f''(Z)-\mu_Y^2f^{(3)}(Z)]
\end{align}
and
\begin{align}
\mu_X\E[Yf''(Z)]+\mu_Y\E[Yf^{(3)}(Z)]&=\E[(Z-\mu_X\mu_Y)f''(Z)-(1+\mu_Y^2)f^{(3)}(Z)\nonumber\\
\label{eq:444}&\quad-Zf^{(4)}(Z)].
\end{align}
Substract \eqref{eq:444} to \eqref{eq:333} to get
\begin{align*}\E[Zf^{(4)}(Z)+f^{(3)}(Z)-(2Z+\mu_X\mu_Y)f''(Z)-(1&+\mu_X^2+\mu_Y^2)f'(Z)\\
&+(Z-\mu_X\mu_Y)f(Z)]=0,
\end{align*}
from which we deduce that (\ref{unequal1}) is a Stein operator for $Z$. \hfill $\Box$

\vspace{3mm}

Lastly, we note that applying the  operator (\ref{equal1}) to $f(x)=g'(x)+g(x)$ yields
\[xg^{(4)}(x)+ g^{(3)}(x)-(2x+\mu^2)g''(x)-(1+2\mu^2)g'(x)+(x-\mu^2)g(x),\]
which we recognise as the Stein operator (\ref{unequal1}) in the special case $\mu_X=\mu_Y=\mu$.

\subsubsection{Sums of products of normals}

Let us begin by noting a simple result, that has perhaps surprisingly not previously been stated explicitly in the literature.  Suppose $X,X_1,\ldots,X_n$ are i.i.d., with Stein operator $A_Xf(x)=\sum_{k=0}^m(a_kx+b_k)f^{(k)}(x)$, where $m\geq1$ and the $a_k$ and $b_k$ are real-valued constants.  Let $W=\sum_{j=1}^nX_j.$  Then, by conditioning,
\begin{align*}&\E[(a_0W+nb_0)f(W)]\\
&=\sum_{j=1}^n\E\bigg[\E\bigg[(a_0X_j+b_0)f(W)\,\bigg|\,X_1,\ldots,X_{j-1},X_{j+1},\ldots,X_n\bigg]\bigg] \\
&=-\sum_{j=1}^n\E\bigg[\E\bigg[\sum_{k=1}^m(a_kX_j+b_k)f^{(k)}(W)\,\bigg|\,X_1,\ldots,X_{j-1},X_{j+1},\ldots,X_n\bigg]\bigg] \\
&=-\E\bigg[\sum_{k=1}^m(a_kW+nb_k)f^{(k)}(W)\bigg].
\end{align*}
Thus, a Stein operator for $W$ is given by
\begin{equation}\label{sumop}A_Wf(x)=\sum_{k=0}^m(a_kx+nb_k)f^{(k)}(x).
\end{equation}

\begin{remark}
  Identity \eqref{sumop} actually generalises similar observations for
  score functions and Stein kernels, for which such an
  additive stability is well-known, see Nourdin, Peccati and Swan \cite{nourdinpecswan2014}.
\end{remark}

Since the coefficients in the Stein operators (\ref{equal1}) and (\ref{unequal1}) are linear, we can use (\ref{sumop}) to write down a Stein operator for the sum $W=\sum_{i=1}^rX_iY_i$, where $(X_i)_{1\leq i\leq r}\sim N(\mu_X,1)$ and $(Y_i)_{1\leq i\leq r}\sim N(\mu_Y,1)$ are independent.  When $\mu_X=\mu_Y=\mu$, we have
\begin{equation}\label{nearop}A_{W}=MD^3 + (rI-M) D^2 - (M+r(1+\mu^2)I) D + M - r\mu^2I,
\end{equation}
and when $\mu_X$ and $\mu_Y$ are not necessarily equal, we have
\begin{equation}\label{nearop2}A_{W}=MD^4+D^3-(2M+r\mu_X\mu_YI)D^2-r(1+\mu_X^2+\mu_Y^2)D+M-r\mu_X\mu_YI.
\end{equation}
When $\mu_X=\mu_Y=0$, the random variable $W$ follows the $\mathrm{VG}(r,0,1,0)$ distribution (see Gaunt \cite{gaunt vg}, Proposition 1.3).  Taking $g=f'-f$ in \eqref{nearop}  gives $A_Wg(x)=xg''(x)+rg'(x)-xg(x)$, which we recognise as the $\mathrm{VG}(r,0,1,0)$ Stein operator that was obtained in Gaunt \cite{gaunt vg}.

\begin{section}{Some results concerning the Stein operators for products of independent normal random variables}\label{sec3}
\end{section}

\subsection{On the minimality of the operators}\label{sec3.1}

The operator (\ref{eq:prop314}) is at most a seventh
order differential operator.  However, for particular cases, such as
the product of two i.i.d$.$ non-centered normals, the operator reduces
to one of lower order, see Section~\ref{sec:product-non-centered}.
Whilst we believe that the third order operator (\ref{equal1}) is a
minimal order polynomial operator, we have no proof of this claim (nor
do we have much intuition as to whether the seventh order operator
(\ref{eq:prop314}) is of minimal order). We believe this question of
minimality to be important:
\vspace{-1mm}
  \begin{con}
    \label{sec:general-result} 
There exists no second order Stein operator (acting on smooth functions with compact support) with polynomial
coefficients  for the product  of two independent non-centered normal
random variables. 
  \end{con}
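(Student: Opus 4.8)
The plan is to pass to the dual (density/Fourier) side, where the notion of ``order'' becomes transparent, and then to reduce the unbounded-degree problem to a single factorization question about the operators already obtained above. First I would record the elementary duality. If $A=\sum_{i,j}a_{ij}M^iD^j$ has differential order $\le 2$ and polynomial coefficients, then for $f\in\mathcal C_0^\infty(\R)$ one has $\E[Af(Z)]=\int f\,(A^\ast\rho)$, where $\rho$ is the density of $Z$ and $A^\ast=\sum_{i,j}a_{ij}(-1)^jD^jM^i$ has the \emph{same} differential order $2$ and polynomial coefficients. Hence a nonzero order-$2$ Stein operator for $Z$ exists if and only if $\rho$ solves a nonzero second-order linear ODE with polynomial coefficients, i.e. iff the annihilator ideal $\mathrm{Ann}(\rho)$ in the Weyl algebra $\C[x]\langle D\rangle$ contains an element of order $\le 2$. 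The conjecture is thus equivalent to the assertion that $\mathrm{Ann}(\rho)$ has no element of order $\le 2$, and the brute-force scheme of Section~\ref{sec3.1} proves exactly this under an \emph{a priori} bound on the degree of the coefficients; the content of the conjecture is to remove that bound.

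Next I would exploit the fact that $\rho$ is already annihilated by an operator of low order: a direct computation gives the explicit characteristic function
\[
\phi(t)=(1+t^2)^{-1/2}\exp\!\Big(\tfrac{-t^2(\mu_X^2+\mu_Y^2)+2it\,\mu_X\mu_Y}{2(1+t^2)}\Big),
\]
so all local/monodromy data of $\rho$ are computable, and $\rho$ is annihilated by the operator $L:=A_Z^\ast$ of order $d=3$ in the equal-means case \eqref{equal1} (resp. order $d=4$ in the unequal-means case \eqref{unequal1}). Suppose now, for contradiction, that $L_2\in\mathrm{Ann}(\rho)$ has order $2$. Working in the Weyl algebra over the field $\C(x)$ of rational functions, where right division by the order-$2$ operator $L_2$ is available irrespective of the degrees of its coefficients, write $L=QL_2+R$ with $\mathrm{ord}(R)\le 1$ and $Q,R$ having coefficients in $\C(x)$. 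Restricting to $x>0$, where $\rho$ is real-analytic and solves $L\rho=L_2\rho=0$ classically, we obtain $R\rho=0$ on $(0,\infty)$.

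Two cases remain. If $R\neq 0$ then, since $\mathrm{ord}(R)\le1$ and $\rho\not\equiv0$, necessarily $R$ has order exactly $1$, whence $\rho'/\rho$ is rational and $\rho=C\exp\!\int r$ for some rational $r$; such a function has only algebraic-type singularities. This contradicts the behaviour of $\rho$ at the origin: a short estimate on the product-density formula $\rho(z)=\int_\R p_V(v)\,p_W(z/v)\,|v|^{-1}\,\mathrm{d}v$, with $V\sim N(\mu_X,1)$ and $W\sim N(\mu_Y,1)$, shows $\rho(z)\sim c\,(-\log|z|)$ as $z\to0$ with $c>0$, a genuine logarithmic singularity (generalising the $\tfrac1\pi K_0(|z|)$ behaviour of the centered case). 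If instead $R=0$, then $L_2$ is a right factor of $L$ over $\C(x)$, so the two-dimensional solution space of $L_2$, which contains $\rho$, is a subspace of the $d$-dimensional solution space of $L$. This would be ruled out by local analysis: at the regular singular point $x=0$ the solution $\rho$ carries nontrivial unipotent monodromy (the logarithm), while $x=\infty$ is an irregular singular point of rank one, and one checks that the solution space of $L$ admits no two-dimensional subspace that is simultaneously compatible with the monodromy data at $0$, with the exponential and Stokes data at $\infty$, and contains $\rho$.

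I expect the $R=0$ branch to be the main obstacle. The reduction above is clean and, crucially, removes the coefficient-degree restriction of Section~\ref{sec3.1}, turning the problem into a factorization (differential-Galois) question for a single fixed operator $L$. Nevertheless, carrying out the factorisation analysis rigorously, establishing that no admissible order-$2$ right factor exists, and doing so uniformly across the equal- and unequal-means cases ($d=3$ and $d=4$), is delicate; this is precisely the step we are presently unable to complete, which is why the statement is recorded as a conjecture.
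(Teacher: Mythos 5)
First, note that the statement you are addressing is recorded in the paper as a \emph{conjecture}: the authors give no proof, only a brute-force verification that no second order operator with \emph{linear} coefficients exists (a finite linear system in the moments of $Z$ with nonzero determinant), which leaves the coefficient degree unbounded and hence the conjecture open. Your reduction is therefore not being compared against a complete argument, and it is a genuinely different and more promising route: passing to the adjoint acting on the density $\rho$, right-dividing the known annihilator $L=A_Z^\ast$ of order $d\in\{3,4\}$ by a hypothetical order-$2$ annihilator $L_2$ in the Weyl algebra over $\C(x)$, and analysing the remainder $R$ does remove the degree restriction that limits the paper's approach. The $R\neq0$ branch of your dichotomy is essentially sound: $\rho$ would then be hyperexponential, $\rho=e^{g(x)}\prod_i(x-a_i)^{c_i}$ with $g$ rational, and this is incompatible with the genuine logarithmic singularity $\rho(z)=\tfrac{1}{\pi}e^{-(\mu_X^2+\mu_Y^2)/2}(-\log|z|)+O(1)$ as $z\to0$, which one reads off from the $m=n=0$ term of \eqref{pdfcom} (all other terms are bounded near the origin). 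Two points need care but are harmless: $L_2\rho=0$ holds distributionally on $\R$ and classically only on $\R\setminus\{0\}$, since $\rho$ is not smooth at the origin; and $R\rho=0$ holds only off the finitely many poles of the coefficients of $Q$, which is enough since $\rho$ is real-analytic on $(0,\infty)$.

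The genuine gap is the $R=0$ branch, and it is not a technicality --- it is the entire content of the conjecture. You assert that ``one checks'' that $\mathrm{Sol}(L)$ admits no two-dimensional subspace containing $\rho$ compatible with the monodromy at $0$ and the Stokes data at $\infty$, but no such check is performed, so nothing is proved; the proposal is a reformulation of the conjecture as a factorization problem, not a solution of it. It is worth recording that your dichotomy is complete (any order-$2$ element of $\mathrm{Ann}(\rho)$ either produces a nonzero $R$ of order $\le1$, which your singularity argument excludes, or is a right factor of $L$ over $\C(x)$), so what remains is precisely: show that $L$ has no order-$2$ right factor over $\C(x)$ whose solution space contains $\rho\vert_{(0,\infty)}$. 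For the equal-means case \eqref{equal1}, where $d=3$, an order-$2$ right factor is equivalent to an order-$1$ left factor, i.e.\ to the adjoint $L^{\ast}$ admitting a hyperexponential solution, which is algorithmically decidable; for the unequal-means case \eqref{unequal1}, with $d=4$, a full factorization algorithm for fourth-order operators would be needed. Until one of these computations is carried out and shown to yield no admissible factor, the conjecture remains exactly as open as it is in the paper.
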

\vspace{-1mm}
One can use a brute force approach to verify the conjecture for polynomials of fixed order (if the conjecture holds).  Such results would be worthwhile in practice, because a third order Stein operator with linear coefficients may be easier to use in applications than one of second order with polynomial coefficients of degree greater than one. 

Let us now us the brute force approach to prove that there is no second order Stein operator with linear coefficients for the product  of two independent non-centered normals (generalisations are obvious).       
Let $X$ and $Y$ be independent $N(1,1)$ random variables and let $Z=XY$.  Suppose that there was such a Stein operator for $Z$, then it would be of the form $A_Zf(x)=\sum_{j=0}^2(a_{0,j}+a_{1,j}x)f^{(j)}(x)$, where $f^{(0)}\equiv f$. Now, if $A_Z$ was a Stein operator for $Z$, we would have $\E[A_Zf(Z)]=0$ for all $f$ in some class $\mathcal{F}$ that contains the monomials $\{x^k\,:\,k\geq1\}$.  Taking $f(x)=x^k$, $k=0,1,\ldots,5$, we obtain six equations for six unknowns.  Letting $\mu_k$ denote $\E Z^k$, we have $\mu_1=1$, $\mu_2=4$, $\mu_3=16$, $\mu_4=100$, $\mu_5=676$ and $\mu_6=5776$.  This leads to the system of equations
\begin{align*}a_{1,0}+a_{0,0}&=0 \\
a_{1,1}+a_{0,1}+4a_{1,0}+a_{0,0}&=0\\
2a_{1,2}+2a_{0,2}+8a_{1,1}+2a_{0,1}+16a_{1,0}+4a_{0,0}&=0 \\
24a_{1,2}+6a_{0,2}+48a_{1,1}+12a_{0,1}+100a_{1,0}+16a_{0,0}&=0\\
192a_{1,2}+48a_{0,2}+400a_{1,1}+48a_{0,1}+676a_{1,0}+100a_{0,0}&=0\\
2000a_{1,2}+320a_{0,2}+3380a_{1,1}+500a_{0,1}+5776a_{1,0}+676a_{0,0}&=0.
\end{align*}
We used \emph{Mathematica} to compute that the determinant of the matrix corresponding to this system of equations is $783\,360\not=0$.  Therefore, there is a unique solution, which is clearly $a_{1,2}=\cdots=a_{0,0}=0$.  Thus, there does not exist a second order Stein operator with linear coefficients for $Z$.

Similarly, one can show that there is no third order Stein operator with linear coefficients for the product of two independent normals with different means.  Here we took $X\sim N(1,1)$ and $Y\sim N(2,1)$, and sought a Stein operator of the form $A_Zf(x)=\sum_{j=0}^3(a_{0,j}+a_{1,j}x)f^{(j)}(x)$.  We then used the monomials $f(x)=x^k$, $k=0,1,\ldots,7$, to generate eight linear equations in eight unknowns, and found the determinate of the matrix corresponding to this system of equations to be $10\,157\,222\,707\,200\not=0$. 

\subsection{Characterisation by the operators}\label{sec3.2}

We begin with a simple general result, which perhaps surprisingly has not previously been given in the literature.  The proof technique has, however, appeared in the literature; see the proof of Lemma 5.2 of Ross \cite{ross} for case of the exponential distribution.

\begin{proposition}\label{conmom}Suppose that the law of the random variable $X$, supported on $I\subset\R$, is determined by its moments.  Let the operator $A_X=\sum_{i=1}^n\sum_{j=1}^p a_{i,j}M^jD^i$, where $a_{i,j}\in\R$, act on a class of functions $\mathcal{F}$ which contains all polynomial functions.  Suppose $A_X$ is a Stein operator for $X$: that is, for all $f\in\mathcal{F}$,
\begin{equation}\label{xmom}\E[A_Xf(X)]=0.
\end{equation}
Now, let $m=\max_{i,j}(j-i)-\min_{i,j}(j-i)-1$, where the maxima and minima are taken over all $i,j$ such that $a_{i,j}\not=0$.  Suppose that the first $m$ moments of $Y$ are equal to those of $X$ and that 
\begin{equation}\label{monoeq}\E[A_Xf(Y)]=0
\end{equation}
for all $f\in\mathcal{F}$.  Then $Y$ has the same law as $X$. 
\end{proposition}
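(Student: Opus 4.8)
The plan is to use the hypothesis that $X$ is determined by its moments: it then suffices to prove that $Y$ has the same moment sequence as $X$, since equal moment sequences force equal laws. Write $\mu_\ell^X=\E[X^\ell]$, $\mu_\ell^Y=\E[Y^\ell]$ and $\delta_\ell=\mu_\ell^Y-\mu_\ell^X$; the goal is $\delta_\ell=0$ for every $\ell\ge 0$. First I would convert the two Stein identities \eqref{xmom} and \eqref{monoeq} into a recurrence for the moments by testing them against monomials. Since $M^jD^i(x^k)=(k)_i\,x^{k-i+j}$, where $(k)_i=k(k-1)\cdots(k-i+1)$ is the falling factorial (with $(k)_i=0$ for $0\le k<i$), substituting $f(x)=x^k$ into \eqref{xmom} and into \eqref{monoeq} and subtracting yields, for every $k\ge 1$,
\[
\sum_{i,j}a_{i,j}\,(k)_i\,\delta_{k+j-i}=0 .
\]
The case $k=0$ carries no information, because each summand contains a factor $D^i$ with $i\ge 1$ and so annihilates constants.

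Next I would read this as a finite-window linear recurrence. With $d_{\max}=\max_{i,j}(j-i)$ and $d_{\min}=\min_{i,j}(j-i)$ taken over the indices with $a_{i,j}\ne 0$, we have $m=d_{\max}-d_{\min}-1$, and for each fixed $k$ the displayed identity involves only the $m+2$ consecutive quantities $\delta_{k+d_{\min}},\dots,\delta_{k+d_{\max}}$, with top coefficient $c(k)=\sum_{i}a_{i,\,i+d_{\max}}(k)_i$. The argument is then a strong induction on $\ell$. The base cases $\delta_0=\delta_1=\cdots=\delta_m=0$ hold because $\mu_0=1$ for both laws and the first $m$ moments of $Y$ agree with those of $X$ by hypothesis. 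For the inductive step, fix $\ell\ge m+1$, assume $\delta_0=\cdots=\delta_{\ell-1}=0$, and apply the recurrence at $k=\ell-d_{\max}$ (note $k\ge -d_{\min}$, so $k\ge 0$ is a legitimate exponent whenever $d_{\min}\le 0$, as holds for the Stein operators of interest). Every summand with $j-i<d_{\max}$ then carries a $\delta$ of index between $\ell-(m+1)\ge 0$ and $\ell-1$, hence vanishes by the inductive hypothesis, and the equation collapses to $c(\ell-d_{\max})\,\delta_\ell=0$.

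The crux, and the step I expect to be the main obstacle, is to guarantee that the top coefficient $c(\ell-d_{\max})$ is nonzero at every stage of the induction, for only then can one conclude $\delta_\ell=0$. Here $c(k)$ is a polynomial in $k$ whose leading term is $a_{i^*,\,i^*+d_{\max}}(k)_{i^*}$, with $i^*=\max\{i:a_{i,\,i+d_{\max}}\ne 0\}$; since $a_{i^*,\,i^*+d_{\max}}\ne 0$, it is a nonzero polynomial and so can vanish only at finitely many $k$. In the favourable situation where $d_{\max}$ is attained by a single term, $c(k)=a_{i^*,\,i^*+d_{\max}}(k)_{i^*}$ is nonzero for all $k\ge i^*$ (in particular a nonzero constant when that term is undifferentiated), and the induction closes at once. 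In general one must check that none of the finitely many integer roots of $c$ lies in the induction range $k\ge -d_{\min}$; this is the only place where genuine work, or an additional nondegeneracy hypothesis, is needed. Once $\delta_\ell=0$ is established for all $\ell$, the moment sequences of $X$ and $Y$ coincide, and the assumed moment-determinacy of $X$ gives $Y\law X$, which completes the proof.
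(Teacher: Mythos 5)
Your proposal follows essentially the same route as the paper: test the Stein identity against the monomials $x^k$, convert it into a finite-window linear recurrence for the moments, and propagate the $m$ assumed initial agreements forward to conclude that all moments of $Y$ coincide with those of $X$, whence moment-determinacy finishes the argument. The one point where you go beyond the paper is the leading-coefficient issue: the paper simply asserts that ``forward substitution'' uniquely determines all higher moments, which tacitly assumes that the coefficient of the top moment $\E Y^{k+d_{\max}}$, namely the polynomial $c(k)=\sum_i a_{i,\,i+d_{\max}}(k)_i$, never vanishes for $k\ge m$; you are right that this is a genuine nondegeneracy condition that is not guaranteed by the stated hypotheses in full generality (though it holds in the applications the paper has in mind, where $d_{\max}$ is attained by a single term), so your explicit flagging of it is a legitimate sharpening rather than a defect of your argument.
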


\begin{proof}We prove that all moments of $Y$ are equal to those of $X$.  As the moments of $X$ determine its law, verifying this proves the Proposition.   The monomials $\{x^k\,:\,k\geq1\}$ are contained in the class $\mathcal{F}$, so applying $f(x)=x^k$, $k\geq m$, to (\ref{monoeq}) yields the recurrence
\begin{equation}\label{receq}\sum_{i,j}a_{i,j}C_k\E Y^{k+j-i}=0, \quad k\geq m,
\end{equation}   
where $C_k=k(k-1)\cdots (k-i+1)$ if $k-i+1>0$ and $C_k=0$ otherwise.  We have that $\E Y^0=1$ and we are given that $\E Y^k=\E X^k$ for $k=1,\ldots, m$.  We can then use forward substitution in (\ref{receq}) to (uniquely) obtain all moments of $Y$.  Due to (\ref{xmom}), $\E[A_Xf(X)]=0$ for all $f\in\mathcal{F}$, and so it follows by the above reasoning that
\begin{equation*}\label{receq1}\sum_{i,j}a_{i,j}C_k\E X^{k+j-i}=0, \quad k\geq m.
\end{equation*}
But this is same recurrence relation as (\ref{receq}) and, since $\E Y^k=\E X^k$ for $k=1,\ldots, m$, it follows that $\E Y^k=\E X^k$ for all $k\geq m$ as well.
\end{proof}

If we have obtained a Stein operator $A_X$ for a random variable $X$, then Proposition \ref{conmom} tells us that the operator characterises the law of $X$ if $X$ is determined by its moments.  This characterisation is weaker than those typically found in Stein's method literature, as it involves moment conditions on the random variable $Y$.  This is perhaps not surprising, because the characterisations given in the literature have mostly been found on a case-by-case basis, whereas ours applies to a wide class of distributions.

The distribution of the product of two independent normal distributions is determined by its moments, which can be seen from the existence of its moment generating function $M(s)$ for all $|s|<1$; see Section \ref{sec3.3.1}.  The following full characterisation of the distribution is thus immediate from Proposition \ref{conmom}.

\begin{proposition}(i) Let $W$ be a real-valued random variable whose first three moments are equal to that of the random variable $Z=XY$, where $X\sim N(\mu_X,1)$ and $Y\sim N(\mu_Y,1)$ are independent.  Then $W$ is equal in law to $Z$ if and only if  
\begin{align}\label{char67} &\E \big[Wf^{(4)}(W)+ f^{(3)}(W)-(2W+\mu_X\mu_Y)f''(W)\nonumber\\
&\quad-(1+\mu_X^2+\mu_Y^2)f'(W)+(W-\mu_X\mu_Y)f(W)\big]=0
\end{align}
for all functions $f\in C^4(\R)$ such that $\E|Zf^{(j)}(Z)|<\infty$ for $0\leq j\leq 4$, and $\E|f^{(k)}(Z)|<\infty$ for $0\leq k\leq 3$, where $f^{(0)}\equiv f$.

(ii) Now suppose that $\mu_X=\mu_Y=\mu$, and that the first two moments of $W$ are equal to those of $Z$.  Then $W$ is equal in law to $Z$ if and only if  
\begin{equation*} \E\big[Wf^{(3)}(W) + (1-W) f''(W) - (W+1+\mu^2)f'(W) + (W- \mu^2)f(W)\big]=0
\end{equation*}
for all $f\in C^3(\R)$ such that $\E|Zf^{(j)}(Z)|<\infty$, $0\leq j \leq 3$, and $\E|f^{(k)}(Z)|<\infty$, $0\leq k\leq 2$.
\end{proposition}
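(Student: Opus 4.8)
The plan is to obtain both parts as direct applications of Proposition~\ref{conmom}, with the target distribution being that of $Z=XY$ and the candidate distribution that of $W$ (so $W$ plays the role of $Y$ in that proposition). Two facts are needed to set this up. First, $Z$ is determined by its moments: I would read this off from the existence of the moment generating function of $Z$ on $|s|<1$, established in Section~\ref{sec3.3.1}. Second, I already have a polynomial Stein operator for $Z$ in each case---the operator (\ref{unequal1}) derived in Section~\ref{appa} for part (i), and the operator (\ref{equal1}) for part (ii)---so the identity (\ref{char67}) (respectively its part-(ii) analogue) is exactly the assertion $\E[A_Z f(W)]=0$.

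The necessity (``only if'') direction is then immediate. If $W\law Z$, then $W$ and $Z$ satisfy the same integrability constraints and, since $A_Z$ annihilates the test functions under the law of $Z$, the displayed expectation equals $\E[A_Z f(Z)]=0$ for every admissible $f$.

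For sufficiency (``if'') I would write each operator in the form $\sum_{i,j}a_{i,j}M^jD^i$ and compute the quantity $m=\max_{i,j}(j-i)-\min_{i,j}(j-i)-1$ of Proposition~\ref{conmom}. For (\ref{unequal1}) the nonzero coefficients sit at shifts $j-i\in\{-3,-2,-1,0,1\}$, so $m=1-(-3)-1=3$; for (\ref{equal1}) the shifts are $\{-2,-1,0,1\}$, so $m=1-(-2)-1=2$. These are precisely the numbers of matched moments hypothesised in parts (i) and (ii). Since $Z$ has all moments finite, every polynomial lies in the test-function class appearing in the statement, so the hypotheses of Proposition~\ref{conmom} hold and its conclusion gives $W\law Z$.

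The step needing the most care is the interface between the two moment conditions. The class of $f$ in the statement is defined by integrability against $Z$, and I must check it genuinely contains the monomials $f(x)=x^k$ that drive the recurrence of Proposition~\ref{conmom}; this holds because $Z$ has moments of every order. Relatedly, imposing (\ref{char67}) on polynomial $f$ is what forces the higher moments of $W$ to exist in the first place: taking $f(x)=x^k$ for $k\ge m$ produces a term $\E W^{k+1}$ whose coefficient is the coefficient of the $M$ term (i.e.\ $M^1D^0$), equal to $1$ in both operators and the unique term attaining the maximal shift $j-i=1$, so the identity can hold only if $\E W^{k+1}<\infty$. Forward substitution then simultaneously establishes finiteness and determines each moment of $W$ to equal that of $Z$, exactly as in the proof of Proposition~\ref{conmom}; the non-vanishing of that leading coefficient is what makes the substitution well defined.
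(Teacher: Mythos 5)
Your proposal is correct and follows exactly the paper's route: the paper deduces the proposition immediately from Proposition~\ref{conmom} together with moment-determinacy of $Z$ (via the existence of its moment generating function on $|s|<1$), and your computations of $m=3$ for (\ref{unequal1}) and $m=2$ for (\ref{equal1}) correctly match the three- and two-moment hypotheses. The only difference is that you spell out details (the shift sets $j-i$, the membership of monomials in the test class, and the forced finiteness of higher moments of $W$) that the paper leaves implicit.
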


Proposition \ref{conmom} can be used to prove that some other Stein operators given in the literature fully characterise the distribution.  For example, the Stein operator for the product of $n$ independent Beta random variables of Gaunt \cite{gaunt ngb} is characterising, since this product is supported on $(0,1)$ and thus the distribution is determined by its moments.




\subsection{Applications of the operators}\label{sec3.3}

\subsubsection{Characteristic function}\label{sec3.3.1}

As the Stein operator (\ref{unequal1}) has linear coefficients, it turns out to be straightforward to use the characterising equation (\ref{char67}) to find a formula for the characteristic function of the random variable $Z=XY$, where $X\sim N(\mu_X,1)$ and $Y\sim N(\mu_Y,1)$ are independent.  

On taking $f(x)=\mathrm{e}^{\mathrm{i}tx}$ in the characterising equation (\ref{char67}) and setting $\phi(t)=\E[\mathrm{e}^{\mathrm{i}tZ}]$, we deduce that $\phi(t)$ satisfies the differential equation
\begin{equation}\label{charmei}(t^4+2t^2+1)\phi'(t)+(-\mathrm{i}t^3+\mu_X\mu_Yt^2-(1+\mu_X^2+\mu_Y^2)\mathrm{i}t-\mu_X\mu_Y)\phi(t)=0.
\end{equation}
It should be noted that $f(x)=\mathrm{e}^{\mathrm{i}tx}$ is a complex-valued function; here we have applied the characterising equation to the real and imaginary parts of $f$, which are themselves real-valued functions. Solving (\ref{charmei}) subject to the condition that $\phi(0)=1$ then gives that
\begin{equation}\label{phioo}\phi(t)=\frac{1}{\sqrt{1+t^2}}\exp\bigg(\frac{-t(\mu_X^2t+\mu_Y^2t-2\mathrm{i}\mu_X\mu_Y)}{2(1+t^2)}\bigg).
\end{equation}
Setting $s=\mathrm{i}t$ yields a formula for the moment generating function $M(s)=\E[\mathrm{e}^{sZ}]$, which is well-defined for $|s|<1$.
We doubt these formulas are new, but it is interesting to note that we were able to obtain such a simple proof via the Stein characterisation.




\subsubsection{Probability density function}

Let $X\sim N(\mu_X,1)$ and $Y\sim N(\mu_Y,1)$ be independent, and let $Z=XY$.  For $\mu_X=\mu_Y=0$, it is a well-known and easy to prove result that the p.d.f$.$ is given by $p_Z(x)=\frac{1}{\pi}K_0(|x|)$, $x\in\R$.  However, in general, the p.d.f$.$ takes a much more complicated form (see Cui et al$.$ \cite{cui})): for $x\in\R$,
\begin{align}\label{pdfcom}p_Z(x)=\frac{1}{\pi}\mathrm{e}^{-(\mu_X^2+\mu_Y^2)/2}\sum_{n=0}^\infty\sum_{m=0}^{2n}\frac{x^{2n-m}|x|^{m-n}}{(2n)!}\binom{2n}{m}\mu_X^m \mu_Y^{2n-m}K_{m-n}(|x|), 
\end{align}

It is possible to use the Stein operators for the product $Z$ to gain insight into why there is such a dramatic increase in complexity from the zero mean case to non-zero mean case.  To see this, we recall a duality result given in Remark 2.7 of Gaunt, Mijoule and Swan \cite{gms18} (see also Section 4 of that paper for further details).  If $V$ admits a smooth density $p$, which solves the differential equation $B p = 0$ with $B = \sum_{i,j} b_{ij} M^j D^i$, then a Stein operator for $V$ is given by $A = \sum_{i,j} (-1)^i b_{ij} D^i M^j$, and similarly given a Stein operator for $V$ one can write down a differential equation satisfied by $p$. In this manner, we can write down differential equations satisfied by the density $p_W$ of the random variable $W=\sum_{i=1}^r X_iY_i$, where the $X_i$ and $Y_i$ are independent copies of $X$ and $Y$ respectively, using the Stein operators (\ref{nearop}) and (\ref{nearop2}) for this distribution.  When $\mu_X=\mu_Y=\mu$, we have
\begin{align}xp_W^{(3)}(x)+(x+3-r) p_W''(x)&-(x+r(1+\mu^2)-2)p_W'(x)\nonumber\\
\label{pode1}&\quad\quad-(x+1-r\mu^2)p_W(x) =0,
\end{align}
and in general
\begin{align}&xp_W^{(4)}(x)+3p_W^{(3)}(x)-(2x+r\mu_X\mu_Y)p_W''(x)\nonumber \\
\label{pode2}&\quad\quad+(r(1+\mu_X^2+\mu_Y^2)-4)p_W'(x)+(x-r\mu_X\mu_Y)p_W(x) =0.
\end{align}
In the special case $\mu_X=\mu_Y=0$, the density of $Z$ satisfies the modified Bessel differential equation $xp_Z''(x)+p_Z'(x)-xp_Z(x)=0$.

From Section \ref{sec3.1} and the duality result of Gaunt, Mijoule and Swan \cite{gms18}, we
know there do not exist differential equations for $p_Z$ with
linear coefficients with a lower degree than (\ref{pode1}) and
(\ref{pode2}).  Moreover, we were unable to transform (\ref{pode1}) or
(\ref{pode2}) into a well-understood class, such as the Meijer $G$-function differential
equation.  Therefore, the increase
in complexity in the p.d.f$.$ $p_Z$ of $Z$ from the zero mean to
non-zero mean case can be understood from the increase in complexity
of the differential equation satisfied by $p_Z$.  Also, due to the
above reasoning, it seems plausible that  formula (\ref{pdfcom})
cannot be simplified further.

Finally, we note that there is not a severe increase in complexity in the differential equations satisfied by $W$ from the $r=1$ case to the general case.  To the best of our knowledge, a formula for general $r\geq1$ has not been obtained in the literature, and even if the differential equations (\ref{pode1}) and (\ref{pode2}) are not ultimately used to derive such a formula, they do indicate that the formula should be at a similar level of complexity to that of (\ref{pdfcom}), and thus provide motivation for obtaining such a formula.  We note that such a result would be of interest due to the occurrence of such random variables in, for example, electrical engineering applications, see Ware and Lad \cite{ware}.

\section*{Acknowledgements} RG acknowledges support from EPSRC grant
EP/K032402/1 and is currently supported by a Dame Kathleen Ollerenshaw
Research Fellowship.  RG is grateful to Universit\'e de Li\`ege, FNRS
and EPSRC for funding a visit to University de Li\`ege, where some of
the details of this project were worked out.  YS
acknowledges support by the Fonds de la Recherche Scientifique - FNRS
under Grant MIS F.4539.16. Part of GM's research was supported by a
Welcome Grant from Universit\'e de Li\`ege. We would like to thank the referee for their careful reading of our paper and their helpful comments.

\bibliographystyle{apalike}

\end{document}